\newtheorem{thm}{Theorem}[section]
\newtheorem{prop}[thm]{Proposition}
\newtheorem{lem}[thm]{Lemma}
\theoremstyle{definition}
\newtheorem{rmk}[thm]{Remark}
\newtheorem{exam}[thm]{Example}
\newtheorem*{mcs}{Modified Assertion of Severi}
\newcommand{\PP}{\ensuremath{\mathbb{P}}}
\begin{document}

\title[On  the Hilbert scheme of  linearly normal curves]
{On  the Hilbert scheme of  linearly normal curves in $\mathbb{P}^r$ of relatively high degree}

\thanks{The first and second named authors were supported in part by GNSAGA of INdAM and by PRIN 2017 ``Moduli Theory and Birational Classification".  The third named author was supported in part by National Research Foundation of South Korea 
(2019R1I1A1A01058457).}

\author[E. Ballico]{Edoardo Ballico}
\address{Dipartimento di Matematica, Universit\`a degli Studi di Trento\\
Via Sommarive 14, 38123 Povo, Italy}
\email{edoardo.ballico@unitn.it}

\author[C. Fontanari]{Claudio Fontanari}
\address{Dipartimento di Matematica, Universit\`a degli Studi di Trento\\
Via Sommarive 14, 38123 Povo, Italy}
\email{claudio.fontanari@unitn.it}

\author[C. Keem]{Changho Keem}
\address{
Department of Mathematics,
Seoul National University\\
Seoul 151-742,  
South Korea}
\email{ckeem1@gmail.com}

\subjclass{Primary 14C05, Secondary 14H10}

\keywords{Hilbert scheme, algebraic curves, linear series}

\date{\today}
\maketitle

\begin{abstract}
Let $\mathcal{H}_{d,g,r}$ be the Hilbert scheme parametrizing smooth irreducible and non-degenerate curves of degree $d$ and genus $g$ in $\PP^r$. 
We denote by $\mathcal{H}^\mathcal{L}_{d,g,r}$ the union of those components of $\mathcal{H}_{d,g,r}$ whose general element is linearly normal
and we show that any non-empty $\mathcal{H}^\mathcal{L}_{d,g,r}$ ($d\ge g+r-3$) is irreducible for an extensive range of triples $(d,g,r)$
beyond the Brill-Noether range.
This establishes the validity of a suitably modified assertion of Severi regarding the irreducibility of the Hilbert scheme $\mathcal{H}^\mathcal{L}_{d,g,r}$ 
of linearly normal curves for $g+r-3\le d\le g+r$,  $r\ge 3$, and $g \ge 2r+3$ if $d=g+r-3$.
\end{abstract}
\section{\quad An overview, preliminaries and basic set-up}

Given non-negative integers $d$, $g$ and $r\ge 3$, let $\mathcal{H}_{d,g,r}$ denote the Hilbert scheme of smooth curves parametrizing smooth irreducible and non-degenera\-te curves of degree $d$ and genus $g$ in $\PP^r$.

\vskip 4pt
The problem of the irreducibility of $\mathcal{H}_{d,g,r}$ was first addressed by Severi in the paper \cite{Sev1}, hastily written in Italian in 1915 after the outbreak of the First World War, and then resumed in more detail in the book \cite{Sev}, published in German in 1921. Severi claims with an incomplete proof that $\mathcal{H}_{d,g,r}$ is irreducible for $d\ge g+r$ (in 
\cite{Sev1}, \S 3: \emph{Per $n \ge p+r$, le curve $C^n_p$ di $S_r$ ($r \ge 2$) formano una sola famiglia}; in \cite{Sev}, Anhang G, p. 369: \emph{Im Raum $S_r$ ($r > 2$) bilden die irreduziblen Kurven $C$ von der $n$-ten Ordnung und der Geschlecht $p$, wenn  $n \ge p+r$ ist, eine einzige irreduzible Familie $V$}); more generally, Severi asserts that $\mathcal{H}_{d,g,r}$ is irreducible in the Brill-Noether range $\rho (d,g,r):=g-(r+1)(g-d+r)\ge 0$  (in \cite{Sev1}, \S 3: \emph{Per $p > n - r \ge \frac{r}{r+1} p$ le $C^n_p$ di $S_r$ 
formano una sola famiglia}; in \cite{Sev}, Anhang G, p. 399: \emph{Wenn $r + p > n \ge r + \frac{r}{r+1} p$ ist, so bilden die irreduziblen Kurven $n$-ter Ordnung vom Geschlecht $p$
des Raumes $S_r$ eine Familie $W$}).

\vskip 4pt
After Severi, the irreducibility of $\mathcal{H}_{d,g,r}$ has been studied by several authors. Ein proved Severi's first claim for $r=3$ and $r=4$; cf.  \cite[Theorem 4]{E1} 
and \cite[Theorem 7]{E2}. On the other hand, an example of Harris showed that the claim is incorrect for $r \ge 6$; cf. \cite[Proposition 9]{E2}.

\vskip 4pt
For families of curves in $\mathbb{P}^3$ of lower degree $d\le g+2$, the most updated result is that any non-empty $\mathcal{H}_{d,g,3}$ is irreducible for every $d\ge g$; cf. \cite[Theorem 1.5]{KK},  \cite[Proposition 2.1 and Proposition 3.2 ]{KKL}, \cite[Theorem 3.1]{I} and \cite{KKy1}.

\vskip 4pt
For families of curves in $\PP^4$ of lower degree $d\le g+3$, Iliev proved the irreducibility of $\mathcal{H}_{d,g,4}$ for $d=g+3$, $g\ge 5$ and $d=g+2$, $g\ge 11$; cf. \cite{I}. 
Quite recently, there has been a minor extension of the result of Iliev regarding the irreducibility of $\mathcal{H}_{g+2,g,4}$ for low genus cases: namely, $\mathcal{H}_{g+2,g,4}$ is irreducible and generically reduced for any genus $g$ as long as $\mathcal{H}_{g+2,g,4}$ is non-empty; cf. \cite[Corollary 2.2]{KKy2}.

\vskip 4pt
Despite these partial positive results, Severi's second assertion turns out to be false in general: counterexamples have been provided by Eisenbud and Harris (see \cite{EH}), 
by Mezzetti and Sacchiero (see \cite{MS}) and by the third named author (see \cite{Keem}). On the other hand, there has been an attempt to look at the irreducibility problem 
for the Hilbert scheme of smooth curves from a different perspective. As a matter of fact, in all counterexamples quoted above the irreducible components unexpected by 
Severi parametrize non linearly normal curves and indeed there are several sources in the literature suggesting that what Severi indeed had in mind in his original assertion 
might have been the following statement; cf. \cite[page 489]{CS} or the review by Lopez of \cite{Keem} on MathSciNet; AMS Mathematical Reviews MR1221726(95a:14026).

\begin{mcs}  A nonempty $\mathcal{H}^\mathcal{L}_{d,g,r}$ is irreducible for any triple $(d,g,r)$ in the Brill-Noether range  $\rho (d,g,r)=g-(r+1)(g-d+r)\ge 0$, where $\mathcal{H}^\mathcal{L}_{d,g,r}$ is the union of those components of ${\mathcal{H}}_{d,g,r}$ whose general element is linearly normal.
\end{mcs}

Reflecting such a slightly different point of view, the authors of \cite{KK3} showed that every non-empty $\mathcal{H}_{g+1,g,4}^\mathcal{L}$ is irreducible unless $g=9$; cf.
\cite[Theorem 2.1, Theorem 2.2, Remark 1.1 and Remark 2.5]{KK3} for several sporadic reducible examples of $\mathcal{H}^\mathcal{L}_{d,g,3}$ and $\mathcal{H}^\mathcal{L}_{d,g,4}$.

Maintaining the same spirit and following the same line of ideas adopted in \cite{KK3}, the purpose of this article is to extend the result obtained in \cite{KK3} to the case $r\ge 5$. 

\vskip 4pt
The organization of this paper is as follows. After we briefly mention and recall several basic preliminaries in the remainder of this section, we start the next section by proving the irreducibility of  $\mathcal{H}^\mathcal{L}_{g+r,g,r}$ and $\mathcal{H}^\mathcal{L}_{g+r-1,g,r}$ which are relatively easy to verify.  We  then proceed to show the irreducibility of $\mathcal{H}^\mathcal{L}_{g+r-2,g,r}$ for every possible triple $(g+r-2,g,r)$ as long as the corresponding Hilbert scheme of linearly normal curves $\mathcal{H}^\mathcal{L}_{g+r-2,g,r}$ is non-empty; Theorem \ref{g+r-2}.
In the last part of the next section we prove the irreducibility of  $\mathcal{H}^\mathcal{L}_{g+r-3,g,r}$ under the mild restriction $g\ge 2r+3$; Theorem \ref{g+r-3}. In the course of the proof we characterize the residual series with respect to the canonical series of the complete linear series corresponding to the linearly normal curves under consideration. We use  the irreducibility of the Severi variety of plane curves in the final stage of the proof of Theorem \ref{g+r-3}; cf. \cite{AC2} and~\cite{H2}.

\vskip 4pt
For notation and conventions, we usually follow those in \cite{ACGH} and \cite{ACGH2}; e.g. $\pi (d,r)$ is the maximal possible arithmetic genus of an irreducible and non-degenerate curve of degree $d$ in $\PP^r$. Throughout we work over the field of complex numbers. 

\vskip 4pt
Before proceeding, we recall several related results which are rather well-known; cf. \cite{ACGH2}  or \cite[\S 1 and \S 2]{AC2}.
Let $\mathcal{M}_g$ be the moduli space of smooth curves of genus $g$. For any given isomorphism class $[C] \in \mathcal{M}_g$ corresponding to a smooth irreducible curve $C$, there exist a neighborhood $U\subset \mathcal{M}_g$ of the class $[C]$ and a smooth connected variety $\mathcal{M}$ which is a finite ramified covering $h:\mathcal{M} \to U$, as well as  varieties $\mathcal{C}$, $\mathcal{W}^r_d$ and $\mathcal{G}^r_d$ proper over $\mathcal{M}$ with the following properties:
\begin{enumerate}
\item[(1)] $\xi:\mathcal{C}\to\mathcal{M}$ is a universal curve, i.e. for every $p\in \mathcal{M}$, $\xi^{-1}(p)$ is a smooth curve of genus $g$ whose isomorphism class is $h(p)$,
\item[(2)] $\mathcal{W}^r_d$ parametrizes the pairs $(p,L)$ where $L$ is a line bundle of degree $d$ and $h^0(L) \ge r+1$ on $\xi^{-1}(p)$,
\item[(3)] $\mathcal{G}^r_d$ parametrizes the couples $(p, \mathcal{D})$, where $\mathcal{D}$ is possibly an incomplete linear series of degree $d$ and dimension $r$ on $\xi^{-1}(p)$ - which is usually denoted by $g^r_d$. 
\end{enumerate}

\vskip 4pt
Let $\widetilde{\mathcal{G}}$ ($\widetilde{\mathcal{G}}_\mathcal{L}$ resp.) be  the union of components of $\mathcal{G}^{r}_{d}$ whose general element $(p,\mathcal{D})$ of $\widetilde{\mathcal{G}}$ ($\widetilde{\mathcal{G}}_\mathcal{L}$ resp.) corresponds to a very ample (very ample and complete resp.) linear series $\mathcal{D}$ on the curve $C=\xi^{-1}(p)$. Note that an open subset of $\mathcal{H}_{d,g,r}$ consisting of points corresponding to smooth irreducible and non-degenerate curves is a $\mathbb{P}\textrm{GL}(r+1)$-bundle over an open subset of $\widetilde{\mathcal{G}}$. Hence the irreducibility of $\widetilde{\mathcal{G}}$ guarantees the irreducibility of $\mathcal{H}_{d,g,r}$. Likewise, the irreducibility of $\widetilde{\mathcal{G}}_\mathcal{L}$ ensures the irreducibility of 
 $\mathcal{H}_{d,g,r}^\mathcal{L}$.
We also make a note of the following well-known facts regarding the schemes $\mathcal{G}^{r}_{d}$ and $\mathcal{W}^r_d$; cf. \cite[Proposition 2.7, 2.8]{AC2}, \cite[2.a]{H1}, \cite[Ch. 21, \S 3, 5, 6, 11, 12]{ACGH2} and \cite[Theorem 1]{EH}. Following classical terminology, a base-point-fee linear series $g^r_d$ ($r\ge 2$) on a smooth curve $C$ is called birationally very ample when the morphism 
$C \rightarrow \mathbb{P}^r$ induced by  the $g^r_d$ is generically one-to-one (or birational) onto its image;  cf. \cite[p. 570]{EH2}.

\begin{prop}\label{facts}
For non-negative integers $d$, $g$ and $r$, let $\rho(d,g,r):=g-(r+1)(g-d+r)$ be the Brill-Noether number.
	\begin{enumerate}
	\item[\rm{(1)}] The dimension of any component of $\mathcal{G}^{r}_{d}$ is at least $3g-3+\rho(d,g,r)$ which is denoted by $\lambda(d,g,r)$. Moreover, if $\rho(d,g,r)\geq0$, there exists a unique component $\mathcal{G}_0$ of $\widetilde{\mathcal{G}}$ which dominates $\mathcal{M}$(or $\mathcal{M}_g$).
	\item[\rm{(2)}] Suppose $g>0$ and let $X$ be a component of $\mathcal{G}^{2}_{d}$ whose general element $(p,\mathcal{D})$ is such that $\mathcal{D}$ is a birationally very ample linear series on $\xi^{-1}(p)$. Then 
	\[\dim X=3g-3+\rho(d,g,2)=3d+g-9.\]
	\item[\rm{(3)}] $\mathcal{G}^{1}_{d}$ is smooth and irreducible of dimension $\lambda(d,g,1)$ if $g>1, d\ge 2$ and $d\le g+1$.
	\end{enumerate}
\end{prop}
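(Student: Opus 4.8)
All three statements are classical, so the plan is to assemble them from the standard sources, indicating in each case the underlying line of reasoning rather than the details.

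For \emph{(1)} I would use the usual realisation of $\mathcal{G}^r_d$ as the zero locus of a section of a vector bundle. Over a smooth finite cover $\mathcal{M}$ of a neighbourhood in $\mathcal{M}_g$, fix a relative effective divisor $D$ of a large degree $m$ and form the Grassmann bundle $\GG\to\operatorname{Pic}^d_{\mathcal{C}/\mathcal{M}}$ whose fibre over $L$ parametrises $(r+1)$-dimensional subspaces of $H^0(L(D))$; then $\mathcal{G}^r_d$ is the vanishing locus of the tautological section of a bundle of rank $(r+1)m$ on $\GG$ (the chosen subspace must die in the $m$-dimensional space $H^0(L(D)|_D)$). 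Since $\GG$ is smooth of dimension $3g-3+g+(r+1)(d+m-g-r)$, every component of $\mathcal{G}^r_d$ has dimension at least $3g-3+g+(r+1)(d-g-r)=3g-3+\rho(d,g,r)=\lambda(d,g,r)$. For the ``moreover'' clause one invokes, when $\rho(d,g,r)\ge 0$, the deep input: the Fulton--Lazarsfeld irreducibility theorem (for $\rho\ge 1$, $W^r_d$ is irreducible on a general curve), respectively the Eisenbud--Harris monodromy argument (for $\rho=0$, the finitely many $g^r_d$'s on a general curve form one orbit); either way exactly one component of $\mathcal{G}^r_d$ dominates $\mathcal{M}$, and in the degree range relevant here its general member is very ample, so it is the sought $\mathcal{G}_0\subset\widetilde{\mathcal{G}}$. (Compare \cite[Ch.~21, \S\S3--6]{ACGH2} and \cite[Thm.~1]{EH}.)

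For \emph{(2)} I would bring in Harris's irreducibility theorem for Severi varieties \cite{H2} (see also \cite[Prop.~2.7--2.8]{AC2}): writing $\delta=\pi(d,2)-g=\binom{d-1}{2}-g$, the variety $V_{d,\delta}$ of reduced irreducible plane curves of degree $d$ with $\delta$ nodes is irreducible of dimension $\binom{d+2}{2}-1-\delta=3d+g-1$. Sending a general $\Gamma\in V_{d,\delta}$ to the pair (isomorphism class of its normalisation, pulled-back $g^2_d$) defines a dominant rational map onto the closure of any component $X$ of $\mathcal{G}^2_d$ whose general element is birationally very ample, and its general fibre is a $\operatorname{PGL}(3)$-orbit --- a general curve of genus $g>1$ having trivial automorphism group --- hence of dimension $8$; therefore $\dim X=3d+g-1-8=3d+g-9$, which equals $3g-3+\rho(d,g,2)$ by the identity $\rho(d,g,2)=3d-2g-6$. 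Irreducibility of $V_{d,\delta}$ also forces $X$ to be unique, in agreement with (1). For \emph{(3)}, with $g>1$ and $2\le d\le g+1$, I would go through Hurwitz theory: a base-point-free $g^1_d$ on $C$ is a degree-$d$ morphism $C\to\PP^1$ modulo $\operatorname{Aut}(\PP^1)$, the Hurwitz scheme of simply branched degree-$d$ covers of $\PP^1$ is smooth (being \'etale over the configuration space of its $2g-2+2d$ branch points) and irreducible (classical connectedness of the monodromy), so quotienting by $\operatorname{PGL}(2)$ and relativising over $\mathcal{M}$ shows that the open locus of base-point-free pencils in $\mathcal{G}^1_d$ is smooth and irreducible of dimension $3g-3+(2d-g-2)=\lambda(d,g,1)$; the complement (pencils with base points, or pencils inside a $g^2_d$) is of strictly smaller dimension when $d\le g+1$, so it contributes no further component and $\mathcal{G}^1_d$ is smooth and irreducible of the stated dimension --- precisely as recorded in \cite[Ch.~21, \S\S3,5,6]{ACGH2} and \cite[Prop.~2.7--2.8]{AC2}.

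Because each step quotes a finished classical theorem, there is no genuine obstacle internal to this proposition; the points that actually need care are the bookkeeping in (2) --- reconciling the Severi-variety dimension, the $\operatorname{PGL}(3)$-quotient and the Brill--Noether number --- and tracking the precise hypotheses under which the cited statements apply. The real weight of the proposition rests on the two deep external inputs it quotes, namely the Fulton--Lazarsfeld irreducibility theorem behind (1) and Harris's irreducibility of the Severi variety behind (2).
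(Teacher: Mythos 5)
The paper gives no proof of Proposition \ref{facts}: it is recorded as a list of known facts with references to \cite{AC2}, \cite{H1}, \cite{ACGH2} and \cite{EH}, and your proposal assembles essentially the same classical material, so in spirit the two treatments agree. Your sketch of (1) (Grassmann-bundle realization of $\mathcal{G}^r_d$ for the lower bound, Fulton--Lazarsfeld plus Gieseker--Petri for $\rho\ge 1$ and Eisenbud--Harris monodromy for $\rho=0$ for the uniqueness of the component dominating $\mathcal{M}$) is the standard argument and is correct, modulo the routine check that the dominating component has very ample general member for $r\ge 3$, which you only wave at.

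Two of your steps, however, have genuine gaps if read as proofs rather than as citations. In (2) you claim that sending a nodal curve $\Gamma\in V_{d,\delta}$ to its normalization together with the pulled-back $g^2_d$ gives a \emph{dominant} map onto the closure of an arbitrary component $X$ whose general member is birationally very ample. Dominance presupposes that the general member of $X$ admits a plane model with only nodes, and this does not follow from Harris's irreducibility of the nodal variety $V_{d,\delta}$: it is essentially the content of the cited \cite[Prop.~2.7, 2.8]{AC2} (equivalently, the bound $3d+g-1$ for families of degree-$d$ plane curves of geometric genus $g$ with \emph{arbitrary} singularities), i.e.\ very close to the statement being proved. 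The non-circular way to run your argument is in the opposite direction: map the $\mathbb{P}\mathrm{GL}(3)$-frame bundle over $X$ generically finitely into $\PP^N$ (finiteness of the stabilizer of an irreducible plane curve is all that is needed, not triviality of $\mathrm{Aut}$ of the abstract curve, which fails for $g=2$); the image is a constructible subset of $\Sigma_{d,g}$ of dimension $\dim X+8\ge\lambda(d,g,2)+8=3d+g-1$, so by Theorem \ref{severi} equality holds and $\dim X=3d+g-9$. In (3), the Hurwitz-space argument does prove irreducibility and the dimension count (after the stratification estimate you indicate for pencils with base points or with degenerate branching), but it yields smoothness only on the open locus of base-point-free, simply branched pencils; the assertion that $\mathcal{G}^1_d$ is smooth \emph{everywhere} for $2\le d\le g+1$, $g>1$ is a separate deformation-theoretic statement (tangent-space analysis as in \cite[Chapt.~21]{ACGH2}) and does not follow from the complement of that locus having smaller dimension.
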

\begin{rmk}\label{principal}
\begin{enumerate}
\item[(1)]
In the Brill-Noether range $\rho (d,g,r)\ge 0$, the unique component $\mathcal{G}_0$ of $\widetilde{\mathcal{G}}$ (and the corresponding component $\mathcal{H}_0$ of $\mathcal{H}_{d,g,r}$ as well) which dominates $\mathcal{M}$ or $\mathcal{M}_g$ is called the  ``principal component".  

\vskip 4pt
\noindent
\item[(2)] In the range $d\le g+r$ inside the Brill-Noether range $\rho (d,g,r)\ge 0$, the principal component $\mathcal{G}_0$ which has the expected dimension $\lambda (d,g,r)$ is one of the components of $\widetilde{\mathcal{G}}_\mathcal{L}$ (cf. \cite[2.1 page 70]{H1}),   and  hence $\widetilde{\mathcal{G}}_\mathcal{L}$ or 
 $\mathcal{H}_{d,g,r}^\mathcal{L}$ is non-empty in this range. Therefore what we are chasing after in the Brill-Noether range is if $\mathcal{G}_0$ is the only component of 
 $\widetilde{\mathcal{G}}_\mathcal{L}$.
\end{enumerate}
\end{rmk}

We recall that the family of plane curves of degree $d$ in $\mathbb{P}^2$ are naturally parametrised by the projective space $\mathbb{P}^N$, $N=\frac{d(d+3)}{2}$. Let $\Sigma_{d,g}\subset \PP^N$ be the Severi variety of plane curves of degree $d$ and geometric genus $g$. We also recall that a general point of $\Sigma_{d,g}$ corresponds to an  irreducible plane curve of degree $d$ having $\delta:=\frac{(d-1)(d-2)}{2}-g$ nodes and no other singularities. 
The following theorem of Harris is fundamental; cf. \cite[Theorem 10.7 and 10.12]{ACGH2} or \cite[Lemma 1.1, 1.3 and 2.3]{H2} .

\begin{thm}\label{severi}
$\Sigma_{d,g}$ is irreducible of dimension $3d+g-1=\lambda(d,g,2)+\dim\mathbb{P}\rm{GL}(3)$. 
\end{thm}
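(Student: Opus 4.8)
\emph{Proof proposal.} Since this is Harris's theorem on the irreducibility of the Severi variety (cf. \cite{H2}, \cite{ACGH2}), my plan is to follow its architecture. The dimension and smoothness are the classical part of the statement. At a general $[C]\in\Sigma_{d,g}$, with $C$ irreducible of degree $d$ having node set $Z$ ($|Z|=\delta$) and normalisation $\nu:\widetilde C\to C$ with $\widetilde Z:=\nu^{-1}(Z)$ reduced of degree $2\delta$, I would identify the equisingular tangent space at $[C]$ with $H^0(\PP^2,\mathcal{I}_Z(d))/\langle f_C\rangle$ and show $h^1(\PP^2,\mathcal{I}_Z(d))=0$: twist $0\to\mathcal{O}_{\PP^2}(-d)\to\mathcal{I}_Z\to\mathcal{I}_{Z/C}\to0$ by $d$ and use the conductor identification $\mathcal{I}_{Z/C}(d)\cong\nu_*\big(\nu^*\mathcal{O}_C(dH)(-\widetilde Z)\big)$, a line bundle on $\widetilde C$ of degree $d^2-2\delta=(2g-2)+3d$, hence nonspecial. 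This yields that the $\delta$ nodes impose independent conditions on degree-$d$ forms, that the deformations are unobstructed, and that $\dim_{[C]}\Sigma_{d,g}=h^0(\PP^2,\mathcal{I}_Z(d))-1=N-\delta=3d+g-1=\lambda(d,g,2)+\dim\PP\mathrm{GL}(3)$.

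For the irreducibility the first step I would carry out is the degeneration lemma: every irreducible component $V\subseteq\Sigma_{d,g}$ contains in its closure a component of $\Sigma_{d,g-1}$, i.e. one can always specialise so as to acquire one extra node. Concretely, one shows that $\overline V$ meets the locus of $(\delta+1)$-nodal curves along a codimension-one stratum and that an entire $(\delta+1)$-nodal family lies in $\overline V$ near a general point of that stratum. Iterating down to genus $0$, every component of $\Sigma_{d,g}$ contains in its closure some component of $\Sigma_{d,0}$.

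Next I would dispose of the rational case: $\Sigma_{d,0}$, the variety of irreducible rational plane curves of degree $d$ (each carrying its full complement of $\eta:=\binom{d-1}{2}$ nodes), is irreducible of dimension $3d-1$. Its general member is the image of a birational degree-$d$ morphism $\PP^1\to\PP^2$, and such morphisms are parametrised by triples of binary $d$-forms with no common zero, which form a dense open subset of $\PP\big(H^0(\PP^1,\mathcal{O}(d))^{\oplus3}\big)\cong\PP^{3d+2}$, an irreducible variety; a general such image has only nodes, so the induced rational map to $\PP^N$ has image $\overline{\Sigma_{d,0}}$, of dimension $3d+2-3=3d-1$. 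Combined with the previous step, $\overline{\Sigma_{d,0}}$ lies in the closure of \emph{every} component of $\Sigma_{d,g}$.

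Finally I would glue the components via monodromy. Fix a general $[C_0]\in\Sigma_{d,0}$ with $\eta$ nodes; analytically near $[C_0]$ the points of $\Sigma_{d,g}$ are obtained by smoothing $\eta-\delta$ of these nodes while preserving the remaining $\delta$, so the local branches of $\Sigma_{d,g}$ passing through $\overline{\Sigma_{d,0}}$ are indexed by the $\delta$-subsets of the node set of $C_0$, and two branches lie on one component of $\Sigma_{d,g}$ exactly when the two subsets lie in a single orbit under the monodromy action of $\pi_1(\Sigma_{d,0},[C_0])$ on the $\eta$ nodes. The crucial input is that this monodromy is the full symmetric group $\mathfrak{S}_\eta$: transitivity follows from the irreducibility of the incidence variety of pairs (rational curve, one of its nodes), and a transposition is produced by specialising within $\Sigma_{d,0}$ to a curve whose only non-nodal singularity is a single tacnode — a tacnode smooths to either $0$ or $2$ nodes, and a small loop around the relevant discriminant branch exchanges those two — after which primitivity upgrades the group to all of $\mathfrak{S}_\eta$. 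Hence exactly one component of $\Sigma_{d,g}$ contains $\overline{\Sigma_{d,0}}$, and by the degeneration step that component must be all of $\Sigma_{d,g}$. The two places demanding real work are the degeneration lemma and the monodromy computation, both resting on a delicate local analysis of the boundary singularities; the passage from ``transitive plus a transposition'' to the full symmetric group is the subtlest point.
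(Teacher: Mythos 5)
The paper does not actually prove Theorem~\ref{severi}: it is quoted as Harris's theorem, with references to \cite{H2} and \cite[Theorem 10.7 and 10.12]{ACGH2}, and is then used as a black box (via Lemma~\ref{Gisirred}). So your proposal has to be measured against Harris's argument rather than against anything in the text, and at the level of architecture it is faithful to it: the classical dimension--smoothness part is correct as you present it (the conductor identification, the degree count $d^2-2\delta=2g-2+3d$, nonspeciality, hence $h^1(\PP^2,\mathcal{I}_Z(d))=0$, independence of the nodes, and $\dim_{[C]}\Sigma_{d,g}=N-\delta=3d+g-1=\lambda(d,g,2)+\dim\mathbb{P}\mathrm{GL}(3)$), and the irreducibility scheme (degenerate every component down to the rational locus, prove $\Sigma_{d,0}$ irreducible via the parametrization by triples of binary $d$-forms, then glue the local branches of $\Sigma_{d,g}$ along a general rational nodal curve, indexed by $\delta$-element subsets of its $\eta$ nodes, by monodromy) is exactly Harris's strategy.

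As a proof, however, it has genuine gaps at precisely the two steps that carry all the weight, and there you only assert the conclusions. The degeneration lemma --- that the closure of every component of $\Sigma_{d,g}$ contains a whole local stratum of $(\delta+1)$-nodal curves, so that the procedure can be iterated inside Severi varieties down to genus $0$ --- is the actual content of Harris's theorem; nothing in your sketch addresses the real difficulty, namely excluding that the boundary of a component consists only of limits with worse behaviour (cusps, tacnodes, triple points, reducible or non-reduced limits) rather than curves with one more node, and the iteration itself needs this control at every stage. Second, the monodromy step is logically incomplete as stated: a transitive subgroup of $S_\eta$ containing a transposition need not be all of $S_\eta$, so ``transitive plus a transposition'' does not suffice; one needs primitivity or, as is standard, $2$-transitivity, which is obtained from the irreducibility of the incidence variety of curves together with an ordered pair of distinct nodes --- you invoke primitivity but give no argument for it. (Minor points: a tacnode also deforms to exactly one node, and what you really use is that its two-nodal equigeneric deformations form a single branch along which the local monodromy exchanges the two nodes; also, for the gluing one only needs transitivity on $\delta$-element subsets, though that still requires more than bare transitivity.) For the purposes of this paper none of this needs to be redone: the intended proof is the citation of \cite{H2} and \cite{ACGH2}.
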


Denoting by $\mathcal{G'}\subset \mathcal{G}^{2}_{d}$ the 
union of components whose general element $(p,\mathcal{D})$  is such that $\mathcal{D}$ is birationally very ample  on $C=\xi^{-1}(p)$, we 
remark that an open subset of the Severi variety $\Sigma_{d,g}$ is a $\mathbb{P}\textrm{GL}(3)$-bundle over an open subset of  $\mathcal{G}'$.  Therefore, as  an immediate consequence of Theorem \ref{severi}, the irreducibility of  $\Sigma_{d,g}$ implies the 
irreducibility of the locus $\mathcal{G'}\subset \mathcal{G}^{2}_{d}$ and vice versa. We make a note of this  observation as the following lemma.
\begin{lem}\label{Gisirred}
Let $\mathcal{G}'\subset \mathcal{G}^{2}_{d}$ be the union of components whose general element $(p,\mathcal{D})$ is such that $\mathcal{D}$ is birationally very ample on $C=\xi^{-1}(p)$. Then $\mathcal{G}'$ is irreducible.
\end{lem}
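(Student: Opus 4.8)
The plan is to exploit the relationship between the Severi variety $\Sigma_{d,g}\subset\PP^N$ and the locus $\mathcal{G}'\subset\mathcal{G}^2_d$, using Theorem \ref{severi} as a black box. The key point is that a birationally very ample $g^2_d$ on a smooth curve $C=\xi^{-1}(p)$ of genus $g$ is the same datum as a plane model of $C$ of degree $d$ whose normalization is $C$, i.e. a point of (a suitable open subset of) the Severi variety, taken up to the action of $\PP\mathrm{GL}(3)$. First I would set up the incidence correspondence carefully: let $\Sigma^\circ_{d,g}\subset\Sigma_{d,g}$ be the open dense subset parametrizing irreducible nodal plane curves of degree $d$ with exactly $\delta=\binom{d-1}{2}-g$ nodes (so that the normalization has genus exactly $g$ and is smooth), and let $\mathcal{G}'^\circ\subset\mathcal{G}'$ be the open dense subset over which $(p,\mathcal{D})$ is such that $\mathcal{D}$ is a birationally very ample $g^2_d$ with the property that the associated plane curve has only nodes. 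One then has a morphism $\Sigma^\circ_{d,g}\to \mathcal{G}'^\circ$ sending a plane curve $\Gamma$ to the pair $(p,\mathcal{D})$ where $p$ represents the normalization $C$ of $\Gamma$ and $\mathcal{D}\subset |{\cdot}|$ is the pulled-back linear series cut out by lines; this realizes $\Sigma^\circ_{d,g}$ as a $\PP\mathrm{GL}(3)$-bundle over $\mathcal{G}'^\circ$ (two plane curves give the same pair iff they differ by a projective automorphism of $\PP^2$, and the stabilizer of a birationally very ample, hence non-degenerate, plane curve in $\PP\mathrm{GL}(3)$ is finite, so the fibers are in fact $\PP\mathrm{GL}(3)$ up to a finite group — but for the irreducibility argument the key fact is merely that the fibers are irreducible of constant dimension).

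The main step is then the following general principle: if $f\colon X\to Y$ is a dominant morphism of varieties with $X$ irreducible, then $Y$ is irreducible (the image of an irreducible space under a continuous map is irreducible, and $Y=\overline{f(X)}$ when $f$ is dominant). Applying this with $X=\Sigma^\circ_{d,g}$, which is irreducible by Theorem \ref{severi}, and the morphism $\Sigma^\circ_{d,g}\to\mathcal{G}'^\circ$ described above, we conclude that $\mathcal{G}'^\circ$ is irreducible. Since $\mathcal{G}'^\circ$ is by construction dense in $\mathcal{G}'$, it follows that $\mathcal{G}'$ is irreducible. (I would also note, to justify density of $\mathcal{G}'^\circ$ in $\mathcal{G}'$, the classical fact — part of Harris's analysis of the Severi variety — that a general birationally very ample plane curve of degree $d$ and geometric genus $g$ is nodal; this is exactly the statement recalled in the paragraph preceding Theorem \ref{severi}, that a general point of $\Sigma_{d,g}$ corresponds to an irreducible $\delta$-nodal curve.)

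The only genuinely delicate point is the verification that the map $\Sigma^\circ_{d,g}\to\mathcal{G}'^\circ$ is a well-defined morphism of schemes (as opposed to a set-theoretic bijection on points), and that it is dominant — but dominance is immediate since every birationally very ample $g^2_d$ arises from some plane model, and the morphism property is standard: the universal plane curve over $\PP^N$ admits a simultaneous normalization over the locus of $\delta$-nodal curves, producing a family of smooth genus-$g$ curves with a degree-$d$ line bundle, which by the universal property of $\mathcal{M}$ and $\mathcal{G}^2_d$ induces the desired map to $\mathcal{G}^2_d$ landing in $\mathcal{G}'$. I expect the main obstacle, such as it is, to be purely expository: packaging the passage "plane curve $\leftrightarrow$ birationally very ample $g^2_d$ up to $\PP\mathrm{GL}(3)$" cleanly enough that the $\PP\mathrm{GL}(3)$-bundle statement — and hence the irreducibility transfer — is transparent. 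Since the paper has already asserted this bundle structure in the sentence immediately preceding the lemma, the proof can be quite short: it suffices to invoke Theorem \ref{severi} together with that bundle structure and the elementary fact that a dominant image of an irreducible variety is irreducible.
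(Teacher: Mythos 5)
Your proposal is correct and follows essentially the same route as the paper: the paper gives no separate argument beyond the remark immediately preceding the lemma, namely that an open subset of $\Sigma_{d,g}$ is a $\mathbb{P}\mathrm{GL}(3)$-bundle over an open subset of $\mathcal{G}'$, so that Harris's irreducibility theorem (Theorem \ref{severi}) transfers to $\mathcal{G}'$. Your write-up is in fact more detailed than the paper's, spelling out the incidence correspondence, the dominance, and the density of the nodal locus that the paper leaves implicit.
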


We will utilize the  following upper bound of the dimension of an irreducible component of $\mathcal{W}^r_d$, which was proved  and used effectively in \cite{I}. 
A base-point-free linear series $g^r_d$ on $C$  is called compounded of an involution (compounded for short) if the morphism induced by the linear series gives rise to a non-trivial covering map $C\rightarrow C'$ of degree $k\ge 2$. 

\begin{prop}[\rm{\cite[Proposition 2.1]{I}}]\label{wrdbd}
Let $d,g$ and $r\ge 2$ be positive integers such that  $d\le g+r-2$ and let $\mathcal{W}$ be an irreducible component of $\mathcal{W}^{r}_{d}$. For a general element $(p,L)\in \mathcal{W}$, let $b$ be the degree of the base locus of the line bundle $L=|D|$ on $C=\xi^{-1}(p)$. Assume further that for a general $(p,L)\in \mathcal{W}$ the curve $C=\xi^{-1}(p)$ is not hyperelliptic. If the moving part of $L=|D|$ is
	\begin{itemize}
	\item[\rm{(a)}] very ample and $r\ge3$, then 
	$\dim \mathcal{W}\le 3d+g+1-5r-2b$;
	\item[\rm{(b)}] birationally very ample, then 
	$\dim \mathcal{W}\le 3d+g-1-4r-2b$;
	\item[\rm{(c)}] compounded, then 
	$\dim \mathcal{W}\le 2g-1+d-2r$.
	\end{itemize}
\end{prop}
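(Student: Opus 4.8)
The plan is to bound $\dim\mathcal{W}$ by stratifying according to the geometry of the morphism $\phi_{|M|}$ attached to the moving part of $L=|D|$ for a general $(p,L)\in\mathcal{W}$, after two routine reductions. Write $|D|=|M|+F$ with $F$ the fixed divisor of degree $b$ and $|M|$ base-point-free of degree $\delta:=d-b$ on $C=\xi^{-1}(p)$. First, $(p,L)\mapsto(p,M)$ defines a rational map from $\mathcal{W}$ to a component $\mathcal{W}_0$ of $\mathcal{W}^r_\delta$ whose general member is base-point-free of the same type (very ample, birationally very ample, or compounded); since for fixed $(p,M)$ the admissible $F$ form a subvariety of the symmetric product $C^{(b)}$ of dimension $\le b$, we get $\dim\mathcal{W}\le\dim\mathcal{W}_0+b$, and all three asserted bounds have the shape ``[bpf bound for degree $\delta$]$\,+\,b$'', so it suffices to treat $b=0$. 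Second, since each bound is weakly decreasing in $r$, we may replace $r$ by $h^0(L)-1$ and assume $|L|=|M|$ is a complete $g^r_\delta$, so $\phi:=\phi_{|M|}\colon C\to\PP^r$ is non-degenerate of degree $\delta$ onto its image.

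\emph{Birationally very ample case.} Here $\phi$ is birational onto its image. A general $3$-dimensional subspace of $H^0(L)$ still defines a birational morphism, hence a plane curve of degree $\delta$ and geometric genus $g$, i.e.\ a point of the locus $\mathcal{G}'\subset\mathcal{G}^2_\delta$ of Lemma \ref{Gisirred}. This gives a rational map $\mathcal{W}_0\times\GG(3,H^0(L))\dashrightarrow\mathcal{G}'$ that is generically finite onto its image, since from a point of $\mathcal{G}'$ one recovers $L$ (the line bundle cut out by lines) and then the chosen subspace of $H^0(L)$. Its image lies in the sublocus of $\mathcal{G}'$ where the underlying $L$ satisfies $h^0(L)\ge r+1$; as a general member of $\mathcal{G}'$ has $h^0(L)=3$, a Brill--Noether type estimate (translating $h^1(L)$ into the number of plane curves of degree $\delta-4$ through the nodes, which are in general position) shows this sublocus has codimension $\ge r-2$ in $\mathcal{G}'$. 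Combining with $\dim\mathcal{G}'=3\delta+g-9$ (Lemma \ref{Gisirred}, equivalently Theorem \ref{severi} via Proposition \ref{facts}(2)) and $\dim\GG(3,H^0(L))=3(r-2)$ yields
\[\dim\mathcal{W}_0\le(3\delta+g-9)-(r-2)-3(r-2)=3\delta+g-1-4r,\]
which, with the reduction above, is (b).

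\emph{Very ample case.} Now $C\hookrightarrow\PP^r$ is a smooth linearly normal curve, so $[C]$ lies on a component $\mathcal{H}'$ of $\mathcal{H}_{\delta,g,r}$ with linearly normal general member and $\dim\mathcal{W}_0=\dim\mathcal{H}'-\dim\mathbb{P}\mathrm{GL}(r+1)$; since $\dim\mathcal{H}'\le h^0(N_{C/\PP^r})=\chi(N_{C/\PP^r})+h^1(N_{C/\PP^r})$ and $\chi(N_{C/\PP^r})=(r+1)\delta-(r-3)(g-1)$, the asserted inequality is equivalent to
\[h^1(C,N_{C/\PP^r})\le(r-2)\bigl(h^1(C,\mathcal{O}_C(1))-2\bigr).\]
I would derive this from the restricted Euler sequence $0\to\mathcal{O}_C\to\mathcal{O}_C(1)^{\oplus(r+1)}\to T_{\PP^r}|_C\to0$ together with $0\to T_C\to T_{\PP^r}|_C\to N_{C/\PP^r}\to0$, using linear normality to control the connecting homomorphisms and the non-hyperelliptic hypothesis to exclude the instability of $N_{C/\PP^r}$ that would otherwise make the Serre-dual section count too large. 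I expect \emph{this cohomological estimate to be the main obstacle}: a generic plane projection as in the previous case only reproduces the weaker bound (b), so genuine use of the smoothness of $C$ is required.

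\emph{Compounded case.} Here $\phi$ factors as $C\xrightarrow{\ \psi\ }\widetilde{\Gamma}\to\PP^r$ with $\deg\psi=k\ge2$, $\widetilde{\Gamma}$ smooth of genus $\pi_0$ and the second map given by a $g^r_{\delta_0}$ with $k\delta_0=\delta$, so Hurwitz gives $2g-2\ge k(2\pi_0-2)$. If $\pi_0\ge2$, de Franchis' finiteness theorem bounds the fibres of $\mathcal{W}_0\to\mathcal{M}_g$ by $\max_{\widetilde{\Gamma}}\dim G^r_{\delta_0}(\widetilde{\Gamma})$, which a Martens-type inequality bounds from above (using that $\widetilde{\Gamma}$, like $C$, is not hyperelliptic, cf.\ \cite{ACGH}); if $\pi_0\le1$ one uses Proposition \ref{facts}(3) for the pencil $\psi$ (or the analogous elliptic estimate) together with the dimension $(r+1)(\delta_0-r)$ of the $g^r_{\delta_0}$'s on $\PP^1$. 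Adding the $3g-3$ moduli of $C$ and optimizing the resulting expression in $k$ subject to the Hurwitz bound gives $\dim\mathcal{W}_0\le2g-1+\delta-2r$, hence (c). This last case is not difficult but needs care in the low-genus sub-cases and in the optimization over $k$; the decisive point of the whole proposition remains the normal-bundle estimate in the very ample case.
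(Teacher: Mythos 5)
The paper itself does not prove this statement: it is quoted verbatim from Iliev \cite[Proposition 2.1]{I}, whose proof proceeds by projecting the moving part from $r-2$ general points chosen \emph{on} the curve and then applying the $3m+g-9$ bound for irreducible families of birationally very ample nets, plus a covering/moduli count in the compounded case. Measured against the statement, your proposal has genuine gaps in both main cases. In case (b) the whole weight rests on the claim that the locus in $\mathcal{G}'$ of nets whose underlying line bundle has $h^0\ge r+1$ has codimension at least $r-2$. A direct count (that locus has dimension at least $\dim\mathcal{W}_0+3(r-2)$, while $\dim\mathcal{G}'=3\delta+g-9$) shows this claim is \emph{equivalent} to the inequality (b) you are trying to prove, and the justification you offer --- that $h^1$ counts adjoint curves of degree $\delta-4$ through nodes ``in general position'' --- is unavailable precisely on this sublocus, where the nodes by definition fail to impose independent conditions; only the general member of $\mathcal{G}'$ (a general point of the Severi variety) has general nodes. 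So the step is circular. The standard repair, and the route of \cite{I}, is internal projection: choosing general points $x_1,\dots,x_{r-2}$ on $C$ yields a birationally very ample net $N$ of degree $\delta-(r-2)$, the count $\dim\mathcal{W}_0+(r-2)\le 3(\delta-r+2)+g-9$ gives exactly $3d+g-1-4r-2b$, and generic finiteness of the assignment holds because, after your reduction to $h^0(L)=r+1$, recovering the points from $(p,N)$ forces $h^0(N(x_1+\cdots+x_{r-2}))=h^0(N)+(r-2)$, i.e.\ $x_1+\cdots+x_{r-2}$ lies in the finite base locus of $|K_C-N|$ (nonempty since $L$ is special in the range $d\le g+r-2$).

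Case (a) is not proved at all: the inequality $h^1(N_{C/\PP^r})\le(r-2)\bigl(h^1(\mathcal{O}_C(1))-2\bigr)$ is, via $\chi(N_{C/\PP^r})=(r+1)d-(r-3)(g-1)$ and $h^1(\mathcal{O}_C(1))=g-d+r$, nothing but a restatement of the bound (a); you explicitly defer its proof (``the main obstacle''), no argument is given that the Euler-sequence manipulation produces it, and it is not a known general fact in this form --- the cited proof does not go through normal bundles, and bounding $\dim\mathcal{W}$ by $h^0(N_{C/\PP^r})$ still leaves the required $h^1(N_{C/\PP^r})$ estimate open. Case (c) is a plausible sketch in the right spirit (close to the Accola--Griffiths--Harris-type count $\dim\mathcal{X}_{k,\gamma}\le 2g-2+(2k-3)(1-\gamma)$ that the paper itself uses in its Example), but the optimization over the covering degree $k$ and the low-genus subcases are not carried out. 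Finally, a small but fixable point in your first reduction: the image of $\mathcal{W}$ under $(p,L)\mapsto(p,M)$ need not be dense in a component of $\mathcal{W}^r_\delta$, so throughout you must (and can) invoke the plane-curve dimension bounds for arbitrary irreducible families rather than only for components.
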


The authors are grateful to the referee for several valuable comments and useful  suggestions toward an overall improvement of the clarity of this paper.

\section{\quad Irreducibility of $\mathcal{H}^\mathcal{L}_{d,g,r}$ for $g+r-3\le d \le g+r$}

Needless to say, in case $d\ge g+r+1$ there is no  complete linear system of degree $d$ and dimension $r$ by the Riemann-Roch formula. Therefore in this range we have $\mathcal{H}^\mathcal{L}_{d,g,r}=\emptyset$ and  the Modified Assertion of Severi makes sense only if $g-d+r\ge 0$. The first object  to be considered is $\mathcal{H}^\mathcal{L}_{g+r,g,r}$ whose irreducibility is quite obvious.

\begin{thm} $\mathcal{H}^\mathcal{L}_{g+r,g,r}$ is non-empty and irreducible.
\end{thm}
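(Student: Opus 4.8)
The plan is to pin down the unique component of $\widetilde{\mathcal{G}}_\mathcal{L}$ directly, using the fact that in degree $d=g+r$ completeness of a $g^r_d$ forces non‑speciality. First I would record the numerical set‑up. Since $d=g+r$ we have $g-d+r=0$, so by Riemann--Roch \emph{every} line bundle $L$ of degree $g+r$ on a smooth curve $C=\xi^{-1}(p)$ of genus $g$ satisfies $h^0(L)=r+1+h^1(L)\ge r+1$. Hence the relevant locus of $\mathcal{W}^r_{g+r}$ is the entire relative Picard variety $\mathrm{Pic}^{g+r}$ over $\mathcal{M}$, which is irreducible (it is smooth and proper over the irreducible variety $\mathcal{M}$, with geometrically irreducible fibres) of dimension $3g-3+g=4g-3=\lambda(g+r,g,r)$, consistently with $\rho(g+r,g,r)=g\ge 0$. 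Moreover a \emph{complete} $g^r_{g+r}$, i.e. one with $h^0(L)=r+1$, automatically has $h^1(L)=0$ and underlying space $V=H^0(L)$; so it is non‑special and is determined by its line bundle alone.

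Next I would analyse the proper forgetful morphism $\pi\colon\mathcal{G}^r_{g+r}\to\mathcal{W}^r_{g+r}=\mathrm{Pic}^{g+r}$, $(p,(L,V))\mapsto(p,L)$, whose fibre over $(p,L)$ is the Grassmannian $G(r+1,H^0(L))$. Over the dense open subset $U\subset\mathrm{Pic}^{g+r}$ of non‑special bundles this fibre is a single reduced point, so $\mathcal{G}^r_{g+r}$ has exactly one irreducible component $\mathcal{G}^{*}$ dominating $\mathrm{Pic}^{g+r}$, namely the closure of $\pi^{-1}(U)$; it is irreducible, and its general member is a complete non‑special $g^r_{g+r}$. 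Now if $Z$ is any component of $\widetilde{\mathcal{G}}_\mathcal{L}$, then by definition its general element $(p,\mathcal{D})$ is a complete and very ample $g^r_{g+r}$, hence non‑special by the first step, hence it lies in $\pi^{-1}(U)\subseteq\mathcal{G}^{*}$; thus a dense subset of $Z$ lies in $\mathcal{G}^{*}$, so $Z\subseteq\mathcal{G}^{*}$, and since both $Z$ and $\mathcal{G}^{*}$ are irreducible components of $\mathcal{G}^r_{g+r}$ we conclude $Z=\mathcal{G}^{*}$. On the other hand $\widetilde{\mathcal{G}}_\mathcal{L}$ is non‑empty: $d=g+r$ lies in the range $d\le g+r$ inside the Brill--Noether range, so by Remark~\ref{principal}(2) the principal component $\mathcal{G}_0$ is one of the components of $\widetilde{\mathcal{G}}_\mathcal{L}$ (and in fact $\mathcal{G}_0=\mathcal{G}^{*}$ by what we just proved). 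Therefore $\widetilde{\mathcal{G}}_\mathcal{L}=\mathcal{G}^{*}$ is irreducible, and since an open subset of $\mathcal{H}^\mathcal{L}_{g+r,g,r}$ is a $\mathbb{P}\textrm{GL}(r+1)$‑bundle over an open subset of $\widetilde{\mathcal{G}}_\mathcal{L}$, the Hilbert scheme $\mathcal{H}^\mathcal{L}_{g+r,g,r}$ is irreducible.

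The argument is essentially formal, which is why this is the easiest case: once one notices that in degree $g+r$ a complete series is non‑special, everything reduces to the observation that $\mathcal{G}^r_{g+r}$ has a single component dominating $\mathrm{Pic}^{g+r}$. The only non‑combinatorial input is the non‑emptiness of $\widetilde{\mathcal{G}}_\mathcal{L}$, and for that I would simply invoke Remark~\ref{principal}(2); alternatively one could give a one‑line dimension count, noting that for $L$ general in $\mathrm{Pic}^{g+r}(C)$ and $C$ general one has $h^0(L(-p-q))=r-1+h^1(L(-p-q))$ with $h^1(L(-p-q))=h^0(K_C(-L)(p+q))$ and $\deg K_C(-L)(p+q)=g-r$, so the locus of non–very–ample $L$ is the image of a family of dimension $\le g-r$ (shifted by length‑two divisors) in $\mathrm{Pic}^{g+r}(C)$, which has dimension $g$ — strictly larger for all admissible $(d,g,r)$, so the general $L$ is very ample. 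Thus I do not anticipate any genuine obstacle in this case.
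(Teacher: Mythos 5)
Your proof is correct, but it takes a genuinely more self-contained route than the paper's. The paper disposes of this case by quoting the principal-component machinery: since $\rho(g+r,g,r)=g\ge 0$, there is a unique component of $\mathcal{H}_{g+r,g,r}$ dominating $\mathcal{M}_g$, every other component consists of curves whose hyperplane series is special, and in degree $d=g+r$ a special series has $h^0\ge r+2$ by Riemann--Roch, so such curves fail to be linearly normal; hence the principal component is the only component of $\mathcal{H}^\mathcal{L}_{g+r,g,r}$. You use the same numerical kernel (completeness in degree $g+r$ forces non-speciality) but, instead of invoking the uniqueness of the principal component and the cited description of the other components, you identify $\mathcal{W}^r_{g+r}$ with the irreducible relative Picard variety and observe that $\mathcal{G}^r_{g+r}\to\mathcal{W}^r_{g+r}$ has single-point fibres over the non-special locus $U$, so there is exactly one component $\mathcal{G}^*$ lying over $U$ and every component of $\widetilde{\mathcal{G}}_\mathcal{L}$ must equal it; non-emptiness comes from Remark~\ref{principal}(2) or your direct very-ampleness count. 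What your approach buys is independence from the external input about special hyperplane series of non-principal components; what the paper's buys is brevity, since it leans on facts already set up, and it moreover identifies the unique component as the principal one. Two details worth making explicit in your version: the density of $U$ (special bundles of degree $g+r$ are of the form $K_C-D$ with $D\ge 0$ of degree $g-r-2$, a locus of dimension at most $g-r-2<g$), and the irreducibility of $\pi^{-1}(U)$ (e.g.\ it is the image of the section $L\mapsto (L,H^0(L))$ over $U$, or use properness of $\pi$ to see the bijection $\pi^{-1}(U)\to U$ is a homeomorphism); also note that your closing count bounds the non-very-ample locus by dimension $(g-r)+2$, which is smaller than $g$ precisely because $r\ge 3$.
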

\begin{proof} Recall that inside the Brill-Noether range, i.e. in the range $\rho (d,g,r)\ge 0$,  there is only one component of $\mathcal{H}_{d,g,r}$ dominating $\mathcal{M}_g$ which one calls the principal component; cf. \cite[2.1 page 70]{H1} or Remark \ref{principal}. Furthermore, the principal component $\mathcal{H}_0$ has the expected dimension, which is a component of $\mathcal{H}^\mathcal{L}_{d,g,r}$. We further note that in the Brill-Noether range $\rho (d,g,r)\ge 0$, any  possible component $\mathcal{H}$ of $\mathcal{H}_{d,g,r}$ other than $\mathcal{H}_0$ consists of curves in $\mathbb{P}^r$ whose  hyperplane series is special, which seems to be rather well-known. However the authors could not find an adequate source of a proof in the literature and therefore a simple argument is provided as follows.
 Let $\mathcal{H}\neq\mathcal{H}_0$ be an irreducible component of $\mathcal{H}_{d,g,r}$. By Proposition \ref{facts}(1), $\mathcal{H}$ is a $\PP GL(r+1)$-bundle over an open subset of a component $\mathcal{G} ( \neq\mathcal{G}_0)$ of $\widetilde{\mathcal{G}}$ not dominating $\mathcal{M}_g$. Let $C$ be a smooth irreducible non-degenerate curve of degree $d$ and genus $g$ in $\PP^r$ corresponding to a general point $c \in \mathcal{H}$. We claim that $\mathcal{O}_C(1)$ is special.  If $\mathcal{O}_C(1)$ is non-special then, by Riemann-Roch, $d \ge g + r$. Considering the (non-dominating) natural rational projection map 
$\mathcal{G} \overset{\eta}\dashrightarrow \mathcal{M}_g$, 
\begin{eqnarray*}
 \dim\mathcal{G}&\le&\dim\eta(\mathcal{G})+\dim G^r_d(C)<\dim\mathcal{M}_g+\dim G^r_d(C)\\
 &\le&\dim\mathcal{M}_g+\dim J(C)+\dim\mathbb{G}(r,d-g)\\&=&3g-3+g+(r+1)(d-g-r)=3g-3+\rho (d,g,r),
 \end{eqnarray*} which is contradictory to the fact $\dim\mathcal{G}\ge \lambda (d,g,r) =3g-3+\rho (d,g,r)$ (Proposition \ref{facts}(1)), completing the proof of the claim. 
 
 In the case $d=g+r$, a curve which is embedded in $\mathbb{P}^r$ by a special linear series is not linearly normal. Therefore the only component of $\mathcal{H}^\mathcal{L}_{g+r,g,r}$ is the principal component. 	The non-emptiness of $\mathcal{H}^\mathcal{L}_{g+r,g,r}$ follows from \cite[Ch. IV, 3.3.1, 3.3.3 and Proposition 6.1]{Hartshorne}.
\end{proof}

\begin{thm}\label{g+r-1}
$\mathcal{H}^\mathcal{L}_{g+r-1,g,r}$ is  non-empty and irreducible for $g\ge r+1$ and is empty for $g\le r$.
\end{thm}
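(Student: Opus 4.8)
The plan is to analyze the degree $d = g+r-1$ case, where complete linear series of degree $d$ and dimension $r$ on a curve of genus $g$ satisfy $g - d + r = 1$, so the residual series $K_C - D$ has degree $2g-2-d = g-r-1$ and dimension $0$ (i.e. it is an effective divisor of degree $g-r-1$, which is why we need $g \ge r+1$ for $\mathcal{H}^\mathcal{L}_{g+r-1,g,r}$ to be non-empty; for $g \le r$ one checks $\rho < 0$ combined with the linear-normality constraint forces emptiness, or more directly $K_C - D$ would need negative degree). First I would dispose of the emptiness claim for $g \le r$: if $d = g+r-1 \le 2r-1$, then a linearly normal curve would need a complete $g^r_d$ with $\deg(K_C-D) = g-r-1 < 0$ unless $g = r$ (giving $\deg = -1 < 0$), so no such complete very ample series exists; one also notes the principal component requires $\rho(d,g,r) = g - (r+1) \ge 0$, i.e. $g \ge r+1$, consistent with Remark \ref{principal}(2).

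For the substantive irreducibility statement when $g \ge r+1$, the strategy is to pass to $\widetilde{\mathcal{G}}_\mathcal{L}$ as explained in the preliminaries: it suffices to show $\widetilde{\mathcal{G}}_\mathcal{L}$ is irreducible. By Proposition \ref{facts}(1), since $\rho(d,g,r) = g-r-1 \ge 0$, there is a unique component $\mathcal{G}_0$ of $\widetilde{\mathcal{G}}$ dominating $\mathcal{M}_g$, and by Remark \ref{principal}(2) it lies in $\widetilde{\mathcal{G}}_\mathcal{L}$. So I must rule out any other component $\mathcal{G}_1 \subset \widetilde{\mathcal{G}}_\mathcal{L}$. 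The key move is the residuation trick: for $(p, L) \in \widetilde{\mathcal{G}}_\mathcal{L}$ with $L = |D|$ a complete very ample $g^r_d$, the residual $|K_C - D|$ is a complete linear series of degree $g-r-1$ and dimension $0$ — that is, a single effective divisor $E$ of degree $e := g-r-1$. Thus $D \sim K_C - E$, and the assignment $(C, D) \mapsto (C, E)$ identifies an open subset of $\widetilde{\mathcal{G}}_\mathcal{L}$ with (an open subset of) the symmetric product family, i.e. a component of the variety parametrizing pairs $(p, E)$ with $E \in \xi^{-1}(p)^{(e)}$ an effective divisor of degree $e = g-r-1$ such that $K_C - E$ is very ample. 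The family of all pairs $(C, E)$ with $C$ of genus $g$ and $E$ effective of degree $e$ is irreducible (it is the $e$-th symmetric product of the universal curve, which is irreducible since $\mathcal{C} \to \mathcal{M}$ is a smooth family over the irreducible base $\mathcal{M}$), and very ampleness of $K_C - E$ is an open condition; one must check this open locus is non-empty, which follows because it contains the data coming from the principal component.

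The remaining point, and the one requiring the most care, is to confirm that \emph{every} component of $\widetilde{\mathcal{G}}_\mathcal{L}$ in this degree really does have a general member with $h^0(K_C - D) = 1$ and not something larger — but by Riemann–Roch $h^0(D) = d - g + 1 + h^0(K_C-D) = r + h^0(K_C-D)$, and $h^0(D) = r+1$ exactly when $h^0(K_C-D) = 1$; since the general element of any component of $\widetilde{\mathcal{G}}_\mathcal{L}$ corresponds to a complete $g^r_d$, we have $h^0(D) = r+1$ automatically, forcing $h^0(K_C-D)=1$. Hence $K_C - D$ is genuinely a single effective divisor of degree $e$ on every component, so the residuation map is defined on all of $\widetilde{\mathcal{G}}_\mathcal{L}$ and lands in the irreducible symmetric-product family; combined with the fact that $\mathcal{G}_0$ maps into this same irreducible family, and that $\widetilde{\mathcal{G}}_\mathcal{L}$ is (via this map, generically) a locally closed subset matching up with the image, I conclude $\widetilde{\mathcal{G}}_\mathcal{L}$ is irreducible. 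The subtle part to get right is the book-keeping that the residuation correspondence is actually birational onto its image (so that irreducibility transfers both ways) and that base-point-freeness/very-ampleness of $K_C - E$ cuts out a single open dense subset rather than splitting the symmetric product family — this is where I expect to spend the real effort, analogous to the characterization of residual series mentioned in the introduction for the harder Theorem \ref{g+r-3}.
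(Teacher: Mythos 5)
Your proposal is correct and follows essentially the same route as the paper: residuation with respect to the canonical series identifies (a dense open subset of) $\widetilde{\mathcal{G}}_\mathcal{L}$ with pairs $(p,E)$, $E$ an effective divisor of degree $g-r-1$, i.e.\ with an open subset of the irreducible locus $\mathcal{G}^0_{g-r-1}$ (the relative symmetric product), and completeness plus Riemann--Roch forces $h^0(K_C-\mathcal{D})=1$ on every component, exactly as in the paper's proof. The only cosmetic difference is that you rule out $g\le r$ directly via the negative degree of the residual series, whereas the paper invokes the Castelnuovo genus bound; both are fine.
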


\begin{proof} 
Let ${\mathcal{G}}\subset \widetilde{\mathcal{G}}_\mathcal{L}\subset \widetilde{\mathcal{G}}\subset\mathcal{G}^{r}_{g+r-1}$ be an irreducible component whose general element $(p, \mathcal{D})$ is a very ample and complete linear series $\mathcal{D}$ on the curve $C=\xi^{-1}(p)$. 
For a general $(p, \mathcal{D})\in\mathcal{G}$, the residual series $|K_C-\mathcal{D}|=g^{0}_{g-r-1}$ is an effective divisor of degree $g-r-1$. Conversely, for a general effective divisor $p_1+\cdots +p_{g-r-1}\in C_{g-r-1}$ on a non-hyperelliptic curve $C$,  a  linear series which is of the form $|K_C-p_1-\cdots -p_{g-r-1}|$ is a complete and very ample $g^r_{g+r-1}$. Therefore one may easily deduce  that $\widetilde{\mathcal{G}}_\mathcal{L}$ is birational to the irreducible locus $\mathcal{G}^{0}_{g-r-1}$ and so is  $\mathcal{H}^\mathcal{L}_{g+r-1,g,r}$ which is a $\mathbb{P}\textrm{GL}(r+1)$-bundle over an open subset of $\widetilde{\mathcal{G}}_\mathcal{L}$. The non-emptiness is rather clear; a general member of $\mathcal{H}^\mathcal{L}_{g+r-1,g,r}$ corresponds to a curve of degree $g+r-1$ in $\mathbb{P}^r$ obtained by successive $g-r-1$ general projections from the canonical curve.  By the Castelnuovo genus bound \cite[page 116]{ACGH}, for $d=g+r$ and $g\le r$ one has $\pi (d,r)=g-1$. Therefore it follows that $\mathcal{H}^\mathcal{L}_{g+r-1,g,r}=\emptyset$ for $g\le r$.
\end{proof}

\begin{thm}\label{g+r-2} Every non-empty $\mathcal{H}^\mathcal{L}_{g+r-2,g,r}$ is irreducible.
\end{thm}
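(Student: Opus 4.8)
The plan is to follow the same strategy as in Theorem \ref{g+r-1}: identify the components of $\widetilde{\mathcal{G}}_{\mathcal L}\subset\mathcal{G}^r_{g+r-2}$ by analyzing the residual linear series $|K_C-\mathcal D|$, which has degree $2g-2-(g+r-2)=g-r$ and, by Riemann-Roch, satisfies $\dim|K_C-\mathcal D| = (g-r)-1-(g-(g+r-1))$... more precisely $h^0(K_C-\mathcal D)=g-(g+r-2)+r-1=1$ when $\mathcal D$ is complete, so the residual series is a single effective divisor of degree $g-r$ (a $g^0_{g-r}$), at least generically. First I would let $\mathcal{G}$ be an irreducible component of $\widetilde{\mathcal{G}}_{\mathcal L}$ with general element $(p,\mathcal D)$, $\mathcal D$ very ample and complete, and set $C=\xi^{-1}(p)$. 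The map $(p,\mathcal D)\mapsto (p,|K_C-\mathcal D|)$ sends $\mathcal G$ into $\mathcal{G}^0_{g-r}$; conversely, on a general curve $C$, for a general effective divisor $E=p_1+\cdots+p_{g-r}$ the series $|K_C-E|$ is a complete $g^r_{g+r-2}$, and I must show it is very ample (equivalently, for generic choices $\mathcal D$ is very ample). Since $\dim\mathcal{G}^0_{g-r}=3g-3+(g-r)=\lambda(g+r-2,g,2)$... actually $\mathcal{G}^0_{g-r}$ has dimension $3g-3+(g-r)$ and is irreducible (it is dominated by $\mathcal M\times_{\mathcal M}\mathcal{C}_{g-r}$, an open set of a bundle of symmetric products over $\mathcal M$), this would force $\widetilde{\mathcal G}_{\mathcal L}$ to be irreducible and birational to $\mathcal{G}^0_{g-r}$, hence $\mathcal{H}^{\mathcal L}_{g+r-2,g,r}$ irreducible.

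The subtlety, and the reason this case is genuinely harder than Theorem \ref{g+r-1}, is that when $d=g+r-2$ one can no longer assert that $|K_C-E|$ is very ample for \emph{every} choice of a general-moduli curve $C$: the residual divisor of a very ample complete $g^r_{g+r-2}$ need not be a general effective divisor on a general curve — it could be special, e.g. a divisor in a $g^1_{g-r}$ or a $g^2_{g-r}$, and correspondingly there may be components of $\widetilde{\mathcal{G}}_{\mathcal L}$ whose general member sits over curves with extra linear series. So the real argument has to rule out (or absorb into the principal component) all components of $\mathcal{G}^r_{g+r-2}$ whose general element is very ample and complete but whose residual series is not a general $g^0_{g-r}$. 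Here I would invoke Proposition \ref{wrdbd} (with $d=g+r-2$, so $b=0$ for a very ample $L$ on a non-hyperelliptic curve): case (a) gives $\dim\mathcal W\le 3d+g+1-5r = 4g-4-2r$ for the relevant component of $\mathcal{W}^r_d$, to which one adds $\dim|\mathcal D|\cdot$-fibers... actually one compares $\dim\mathcal G$ directly. The point is to check that any component of $\widetilde{\mathcal{G}}_{\mathcal L}$ other than the one dominating $\mathcal{G}^0_{g-r}$ would have dimension strictly less than $3g-3+(g-r)=4g-3-r$, while $\widetilde{\mathcal G}_{\mathcal L}$ has every component of dimension $\ge\lambda(g+r-2,g,r)$; one then verifies $\lambda(g+r-2,g,r) = 3g-3+\rho(g+r-2,g,r)$ and that $\rho(g+r-2,g,r)=g-2(r+1)$, so $\lambda = 4g-2r-5$, which is $\le 4g-3-r$ precisely when $g\ge 2r+2$ — forcing a separate low-genus analysis.

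Thus the key steps, in order, are: (i) compute the residual series degree and dimension via Riemann-Roch and the adjunction/duality for the canonical series; (ii) establish that on a general curve a general $g^0_{g-r}$ residuates to a very ample complete $g^r_{g+r-2}$, giving one distinguished component of $\widetilde{\mathcal{G}}_{\mathcal L}$ birational to the irreducible $\mathcal{G}^0_{g-r}$ — this includes checking the base-point-freeness and very ampleness of $|K_C-E|$, which I would do by a general-position argument (a general $E$ imposes independent conditions and $K_C-E-p-q$ still has the expected $h^0$); (iii) bound the dimension of every other candidate component of $\widetilde{\mathcal{G}}_{\mathcal L}$ using Proposition \ref{wrdbd}(a) and show it cannot be a component of $\widetilde{\mathcal{G}}_{\mathcal L}$ because it is too small, handling the finitely many small-genus triples by hand (where one may need the Castelnuovo bound $\pi(d,r)$, the classification of curves of high genus in $\PP^r$, and possibly Theorem \ref{severi}/Lemma \ref{Gisirred} for residual series that are compounded or plane-model-type). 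The main obstacle I anticipate is step (iii) in the boundary and sub-boundary range $g$ close to $2r+2$: there the naive dimension count does not immediately separate the extra components from the principal one, and one has to argue more delicately — e.g. by showing such residual series ($g^1_{g-r}$ or $g^2_{g-r}$) impose enough conditions on the moduli that the resulting locus of curves, together with the choice of residual series, still fits inside the closure of the principal component, or is empty for ampleness reasons. This is exactly the kind of case-by-case sharpening that makes the $d=g+r-2$ statement hold with no genus restriction at all, in contrast to the $g\ge 2r+3$ hypothesis needed for $d=g+r-3$.
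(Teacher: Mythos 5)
Your argument breaks down at the very first step, the Riemann--Roch computation for the residual series. For a complete very ample $\mathcal{D}=g^r_{g+r-2}$ one has $h^0(K_C-\mathcal{D})=h^0(\mathcal{D})-d+g-1=(r+1)-(g+r-2)+g-1=2$, so the residual series is a \emph{pencil} $g^1_{g-r}$, not a $g^0_{g-r}$; your value $h^0(K_C-\mathcal{D})=1$ is the computation for $d=g+r-1$ (Theorem \ref{g+r-1}), not for $d=g+r-2$. Consequently your ``converse'' is false as well: for a general effective divisor $E$ of degree $g-r$ on any curve one has $h^0(E)=1$, hence $h^0(K_C-E)=r$ and $|K_C-E|$ is only a $g^{r-1}_{g+r-2}$; to residuate to a complete $g^r_{g+r-2}$ the divisor $E$ must move in a pencil. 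The dimensions confirm the mismatch: Proposition \ref{wrdbd}(a) forces any component $\mathcal{G}\subset\widetilde{\mathcal{G}}_\mathcal{L}$ to have dimension exactly $\lambda(g+r-2,g,r)=4g-2r-5$ (note your bound $3d+g+1-5r$ evaluates to $4g-2r-5$, not $4g-2r-4$), whereas $\dim\mathcal{G}^0_{g-r}=3g-3+(g-r)=4g-r-3$, so $\widetilde{\mathcal{G}}_\mathcal{L}$ cannot be birational to $\mathcal{G}^0_{g-r}$. Your subsequent comparison ``$\lambda\le 4g-3-r$ precisely when $g\ge 2r+2$'' is also incorrect (that inequality holds for all $g$), and the anticipated low-genus case analysis is not actually needed.

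The correct route, which is the one the paper takes, replaces $\mathcal{G}^0_{g-r}$ by the space of pencils: the residual series of the elements of $\mathcal{W}\subset\mathcal{W}^r_{g+r-2}$ form a locus $\mathcal{W}^\vee\subset\mathcal{W}^1_{g-r}$, and the squeeze $\lambda(g+r-2,g,r)\le\dim\mathcal{G}\le\dim\mathcal{W}\le 3d+g+1-5r$ gives $\dim\mathcal{W}^\vee=4g-2r-5=\lambda(g-r,g,1)=\dim\mathcal{G}^1_{g-r}$. Since a general element of $\mathcal{W}^\vee$ is a complete pencil, the natural map $\widetilde{\mathcal{W}}^\vee\dashrightarrow\mathcal{G}^1_{g-r}$ is generically injective, hence dominant by the dimension equality, with the obvious inverse $\mathcal{E}\mapsto|\mathcal{E}|$; so $\widetilde{\mathcal{W}}^\vee$ is birational to $\mathcal{G}^1_{g-r}$, which is smooth and irreducible by Proposition \ref{facts}(3) (this is the key input you never invoke). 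Irreducibility of $\widetilde{\mathcal{W}}^\vee$ then yields irreducibility of $\widetilde{\mathcal{G}}_\mathcal{L}$ and hence of $\mathcal{H}^\mathcal{L}_{g+r-2,g,r}$, with no genus restriction and no separate treatment of special residual series.
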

\begin{proof} We first estimate the dimension of a component ${\mathcal{G}}\subset \widetilde{\mathcal{G}}_\mathcal{L}\subset \widetilde{\mathcal{G}}\subset\mathcal{G}^{r}_{g+r-2}$. By Proposition \ref{facts} (1), we have 
\[
\lambda (g+r-2,g,r)=3g-3+\rho(g+r-2,g,r)=4g-2r-5\le \dim {\mathcal{G}}.
\]
Note that  $r=h^0(C, |\mathcal{D}|)-1$ for a general $(p,\mathcal{D})\in{\mathcal{G}}$.
Let $\mathcal{W}\subset \mathcal{W}^{r}_{g+r-2}$ be the component containing the image of the natural rational map 
${\mathcal{G}}\overset{\iota}{\dashrightarrow} \mathcal{W}^{r}_{g+r-2}$ with $\iota (\mathcal{D})=|\mathcal{D}|$.
Since $\dim{\mathcal{G}}\le\dim \mathcal{W}$, it follows  by Proposition  \ref{wrdbd} (1) that 
\[
\lambda (g+r-2,g,r)=4g-2r-5\le \dim \mathcal{G}\le 3(g+r-2)+g+1-5r=4g-2r-5,
\]
and hence
\begin{equation}\label{pencil}
\dim\mathcal{G}=\dim\mathcal{W}=\lambda (g+r-2,g,r)=4g-2r-5.
\end{equation}
We let $\mathcal{W}^\vee\subset \mathcal{W}^{1}_{g-r}$ be the locus consisting  of the residual  series (with respect to the canonical series on the corresponding curve) 
of those elements in $\mathcal{W}$, i.e. $\mathcal{W}^\vee =\{(p, \omega_C\otimes L^{-1}): (p, L)\in\mathcal{W}\}.$
We also let $\widetilde{\mathcal{W}}^\vee\subset\mathcal{W}^1_{g-r}$ be the union of those  components $\mathcal{W}^\vee$ corresponding to each $\mathcal{W}$ arising from some ${\mathcal{G}}\subset \widetilde{\mathcal{G}}_\mathcal{L}$.
By our previous dimension count  (\ref{pencil}), 
\begin{equation}\label{scriptpencil}\dim \mathcal{W}^{\vee}=\dim\mathcal{W}=\dim\mathcal{G}=4g-2r-5=\lambda (g-r,g,1)=\dim \mathcal{G}^1_{g-r}.\end{equation}
Since a general element of any component $\mathcal{W}^{\vee}\subset\widetilde{\mathcal{W}}^\vee\subset\mathcal{W}^1_{g-r}$ is 
a complete pencil by our setting, there is a natural rational map $\widetilde{\mathcal{W}}^\vee\overset{\kappa}{\dashrightarrow}\mathcal{G}^1_{g-r}$ with $\kappa(|\mathcal{E}|)=\mathcal{E}$   which is clearly injective on an open subset  $\widetilde{\mathcal{W}}^{\vee o}$ of $\widetilde{\mathcal{W}}^\vee$ consisting of those which are
complete pencils. Therefore the  rational map $\kappa$ is dominant by (\ref{scriptpencil}). 
We also note that there is another natural rational map 
$\mathcal{G}^1_{g-r} \overset{\iota}{\dashrightarrow}\widetilde{\mathcal{W}}{^\vee}$ with $\iota (\mathcal{E})=|\mathcal{E|}$, which is an inverse to $\kappa$ (wherever it is defined).
Therefore it follows that $\widetilde{\mathcal{W}}^{\vee}$ is birationally equivalent to  the irreducible locus $\mathcal{G}^1_{g-r}$ (cf. Proposition \ref{facts}(3)), hence $\widetilde{\mathcal{W}}^{\vee}$ is irreducible and so is $\widetilde{\mathcal{G}}_\mathcal{L}$. Since $\mathcal{H}_{g+r-2,g,r}^\mathcal{L}$ is a $\mathbb{P}\textrm{GL}(r+1)$-bundle over an open subset of $\widetilde{\mathcal{G}}_\mathcal{L}$,  $\mathcal{H}_{g+r-2,g,r}^\mathcal{L}$ is irreducible.
\end{proof}

\begin{rmk} 
\begin{enumerate}
\item[\rm{(i)}] $\mathcal{H}^\mathcal{L}_{g+r-2,g,r}=\emptyset$ for $g\le r+2$, $r\ge 3$ by the Castelnuovo genus bound. 
\vskip 4pt
\noindent
\item[\rm{(ii)}] For $g=r+3$, a curve of degree $d=g+r-2=2r+1$ and genus $g$ in $\mathbb{P}^r$ is an extremal curve, which is clearly linearly normal. Hence $\mathcal{H}^\mathcal{L}_{2r+1,r+3,r}=\mathcal{H}_{2r+1,r+3,r}\neq\emptyset$ and is irreducible by \cite[Theorem 1.4(i)]{CC} or by our Theorem \ref{g+r-2}.

\vskip 4pt
\noindent
\item[\rm{(iii)}] In the Brill-Noether range $\rho (g+r-2,g,r)=g-2(r+1)\ge 0$, $\mathcal{H}^\mathcal{L}_{g+r-2,g,r}\neq\emptyset$; cf. Remark \ref{principal} (2).

\vskip 4pt
\noindent
\item[\rm{(iv)}] Even outside the Brill-Noether range, e.g. $g=2r$ or  
$g=2r+1$, we have $\mathcal{H}^\mathcal{L}_{g+r-2,g,r}\neq\emptyset$ which is irreducible of the expected dimension. By  \cite[Corollary 1.3, Remark 1.5]{BE}, for every triple $(d,g,r)$  in the range
$$d-r<g\le d-r+\lfloor (d-r-2)/(r-2)\rfloor, d\ge r\ge 3$$ there is a certain reducible curve $C_0\subset \mathbb{P}^r$ of degree $d$ and (arithmetic) genus $g$ such that $h^1(N_{C_{0}})=0$ and $C_0$ is smoothable.  We note that  $g=2r$ ($2r+1$ resp.)  with $d=g+r-2=3r-2$ ($3r-1$ resp.) is in the above range. Therefore a component $\mathcal{H}$ of $\mathcal{H}_{d,g,r}$ containing the reducible curve $C_0$ exists which is of the expected dimension. Furthermore it can be shown that $h^0(C_0, \mathcal{O}_{C_0}(1))=r+1$ and hence by semi-continuity a general element of $\mathcal{H}$ is linearly normal so that $\mathcal{H}^{\mathcal{L}}_{g+r-2,g,r}\neq\emptyset$ which is irreducible by our Theorem \ref{g+r-2}.

\vskip 4pt
\noindent
\item[\rm{(v)}] For low genus $g$ with respect to $r$, we have only scattered examples. For example, take
$r=8$, $g=r+4=12$, $d=g+r-2 =18$. Let $S\subset \PP^8$ be the Del Pezzo surface isomorphic to the blowing up of $\PP^2$ at one point $o$ anti-canonically embedded by 
$|\mathcal{I} _{o,\PP^2}(3)|$. The curve $X$ which is the image of a degree $7$ plane curve with an ordinary triple point at  the point $o$ and no other singularity 
has degree $d=21-3=18$ and genus $g=(7-1)(7-2)/2 -3 =12$, hence $X\in \mathcal{H}^\mathcal{L}_{g+r-2,g,r}$.
\end{enumerate}
\end{rmk}
\begin{thm}\label{g+r-3} 
\begin{enumerate}  \item[\rm{(i)}]  Every non-empty $\mathcal{H}^\mathcal{L}_{g+r-3,g,r}$ is irreducible for $g\ge 2r+3$.

\item[\rm{(ii)}]  $\mathcal{H}^\mathcal{L}_{g+r-3,g,r}$ is non-empty and irreducible  for $g\ge 3r+3$.
\end{enumerate}
\end{thm}
\begin{proof}  (i) We first show the irreducibility of  $\mathcal{H}^\mathcal{L}_{g+r-3,g,r}$ for $g\ge 2r+3$. As in Theorem \ref{g+r-2}, we need to estimate the dimension of a component 
${\mathcal{G}}\subset \widetilde{\mathcal{G}}_\mathcal{L}\subset \widetilde{\mathcal{G}}\subset\mathcal{G}^{r}_{g+r-3}$.
By Proposition \ref{facts} (1), we have 
\begin{equation}\label{net}
\lambda (g+r-3,g,r)=3g-3+\rho(g+r-3,g,r)=4g-3r-6\le \dim {\mathcal{G}}.
\end{equation}
Note that  $r=h^0(C, |\mathcal{D}|)-1$ for a general $(p,\mathcal{D})\in{\mathcal{G}}$.
Let $\mathcal{W}\subset \mathcal{W}^{r}_{g+r-3}$ be a component containing the image of the natural rational map 
${\mathcal{G}}\overset{\iota}{\dashrightarrow} \mathcal{W}^{r}_{g+r-3}$ with $\iota (\mathcal{D})=|\mathcal{D}|$.
We also let $\mathcal{W}^\vee\subset \mathcal{W}^{2}_{g-r+1}$ be the locus consisting  of the residual  series  of elements in $\mathcal{W}$, i.e. $\mathcal{W}^\vee =\{(p, \omega_C\otimes L^{-1}): (p, L)\in\mathcal{W}\}.$

\vskip 4pt
\noindent

\begin{enumerate}
\item[\rm{(a)}]
If a general element of $\mathcal{W}^{\vee}$ is compounded, then by Proposition \ref{wrdbd}(c),
\begin{eqnarray*}
4g-3r-6&\le&\dim\mathcal{G}~\le~\dim \mathcal{W}=\dim \mathcal{W}^{\vee}\\
&\le& 2g-1+(g-r+1)-2\cdot 2\\
&=&3g-r-4
\end{eqnarray*}
implying  $g\le 2r+2$ contrary to our assumption $g\ge 2r+3$. 
Therefore we conclude that a general element of $\mathcal{W}^{\vee}$ is either very ample or birationally very ample. 
\vskip 4pt
\noindent
\item[\rm{(b)}]
Suppose that the moving part of a  general element of $\mathcal{W}^{\vee}\subset\mathcal{W}^{2}_{g-r+1}$ is very ample and let $b$ be the degree of the base locus $B$ of a general element of $\mathcal{W}^{\vee}$. Putting $e:=g-r+1-b$ and $g=\frac{(e-1)(e-2)}{2}$ a simple numerical calculation yields

\begin{align*}\hskip 18pt\dim\mathcal{G}&\le\dim \mathcal{W}=\dim \mathcal{W}^{\vee}\le\dim\mathbb{P}(H^0(\mathbb{P}^2, \mathcal{O}(e)))-\dim\mathbb{P}\rm{GL}(3)+b\\&=\frac{(e+1)(e+2)}{2}-9+b
<\lambda(g+r-3,g,r)=4g-3r-6
\end{align*}
contrary to the inequality (\ref{net}); note that the above inequality is equivalent to  $2r < e^2-5e+4$, which follows from our genus bound  $2r \le g-3 = \frac{(e-1)(e-2)}{2}-3$ and 
$r\ge 3$.
Such a contradiction excludes the case (b). 

\vskip 4pt
\noindent
\item[\rm{(c)}]
Therefore the moving part of a  general element of $\mathcal{W}^{\vee}\subset\mathcal{W}^{2}_{g-r+1}$ is birationally very ample and we let $b$ be the degree of the base locus $B$ of a general element of $\mathcal{W}^{\vee}$. 
 By Proposition \ref{wrdbd}(b), we have
\begin{eqnarray*}
4g-3r-6&\le&\dim\mathcal{G}\le\dim \mathcal{W}= \dim \mathcal{W}^{\vee}\\
&\le& 3(g-r+1)+g-1-4\cdot 2-2b\\
&=&4g-3r-6-2b, 
\end{eqnarray*}
implying $b=0$. Thus it finally follows that  
\begin{equation}\label{equall}
	\dim\mathcal{G}=\dim\mathcal{W}=\dim\mathcal{W}^{\vee}=4g-3r-6=\lambda (g+r-3,g,r).
\end{equation}
\end{enumerate}
\vskip 5pt
\noindent
Retaining the same notation as in the proof of Theorem \ref{g+r-2}, let $\widetilde{\mathcal{G}}_\mathcal{L}$ be the union of irreducible components $\mathcal{G}$ of $\mathcal{G}^{r}_{g+r-3}$ whose general element corresponds to a pair $(p,\mathcal{D})$ such that $\mathcal{D}$ is very ample and complete  linear series on $C:=\xi^{-1}(p)$. Let $\widetilde{\mathcal{W}}^\vee$ be the union of the  components $\mathcal{W}^\vee$ of $\mathcal{W}^2_{g-r+1}$, where $\mathcal{W}^\vee$ consists of the residual series of elements in $\mathcal{G}\subset\tilde{\mathcal{G}}_\mathcal{L}$.
We also let $\mathcal{G}'$ be the union of irreducible components of $\mathcal{G}^{2}_{g-r+1}$ whose general element is a birationally very ample and  base-point-free linear series.  We recall that, by Lemma \ref{Gisirred} and Proposition \ref{facts}(2), $\mathcal{G'}$ is irreducible and $\dim \mathcal{G'}=3(g-r+1)+g-9=4g-3r-6$.  
By  the equality (\ref{equall}), \begin{equation}\label{dominantt}\dim \mathcal{W}^{\vee}=\dim\mathcal{G}=4g-3r-6=\dim \mathcal{G}'.\end{equation}
Since a general element of any component $\mathcal{W}^{\vee}\subset\widetilde{\mathcal{W}}^\vee\subset\mathcal{W}^2_{g-r+1}$ is a base-point-free,  birationally very ample and complete net, there is a natural rational map $\widetilde{\mathcal{W}}^\vee\overset{\kappa}{\dashrightarrow}\mathcal{G}'$ with $\kappa(|\mathcal{E}|)=\mathcal{E}$   which is clearly injective on an open subset  $\widetilde{\mathcal{W}}^{\vee o}$ of $\widetilde{\mathcal{W}}^\vee$ consisting of those which are base-point-free,  birationally very ample and complete nets. Therefore the  rational map $\kappa$ is dominant by (\ref{dominantt}).
We also note that there is a natural rational map 
$\mathcal{G}' \overset{\iota}{\dashrightarrow}\widetilde{\mathcal{W}}{^\vee}$ with $\iota (\mathcal{E})=|\mathcal{E|}$, which is an inverse to $\kappa$ (wherever it is defined).
Therefore it follows that $\widetilde{\mathcal{W}}^{\vee}$ is birationally equivalent to  the irreducible locus $\mathcal{G}'$, hence $\widetilde{\mathcal{W}}^{\vee}$ is irreducible and so is $\widetilde{\mathcal{G}}_\mathcal{L}$. Since $\mathcal{H}_{g+r-3,g,r}^\mathcal{L}$ is a $\mathbb{P}\textrm{GL}(r+1)$-bundle over an open subset of $\widetilde{\mathcal{G}}_\mathcal{L}$,  $\mathcal{H}_{g+r-3,g,r}^\mathcal{L}$ is irreducible.

(ii) Note that $\rho (g+r-3,g,r)\ge 0$ for $g\ge 3r+3$ and the non-emptiness of $\mathcal{H}^\mathcal{L}_{g+r-3,g,r}$ for $g\ge 3r+3$ follows from \cite[Theorem 1..8]{EH3}; cf. \cite[p. 844]{ACGH2}. 
\end{proof}

\begin{rmk}
\begin{enumerate}
\item[\rm{(i)}] For $g\le r+4$, $d=g+r-3$, $r\ge 4$, $\mathcal{H}_{d,g,r}=\mathcal{H}_{g+r-3,g,r}=\emptyset$ ($d\le 2r+1$) by the Castelnuovo genus bound. 
\vskip 4pt
\noindent
\item[\rm{(ii)}]
For the next low genus $g$ with respect to the dimension of the ambient projective space $\mathbb{P}^r$ - say $g=r+5$ - the Hilbert scheme of linearly normal curves $\mathcal{H}^\mathcal{L}_{g+r-3,g,r}=\mathcal{H}^\mathcal{L}_{2r+2,r+5,r}$ is reducible only when $r=4,5$. Indeed,  a curve of genus $g=r+5$ and degree $d=g+r-3=2r+2$ in $\mathbb{P}^r$ ($r\ge 4$) is an extremal curve. 
The case $r=4, g=9,  d= 10$ was treated in \cite[Theorem 2.2]{KK3}, in which case the corresponding Hilbert scheme of linearly normal curves has two components of different dimensions. For $r=5$, $g=10$ and $d=12$ the corresponding Hilbert scheme $\mathcal{H}^\mathcal{L}_{12,10,5}$ of linearly normal curves is reducible with two components; cf.  \cite[Theorem 1.4]{CC}.  For $r\ge 6$, $\mathcal{H}^\mathcal{L}_{2r+2,r+5,r}$ is irreducible also by \cite[Theorem 1.4(i)]{CC}.

\end{enumerate}
\end{rmk}

\begin{exam}
The restriction we imposed on the genus $g\ge 2r+3$ in Theorem \ref{g+r-3} is rather optimal and cannot be eased in general as the following example indicates.  Recall that the restriction was used in two places in the proof of Theorem \ref{g+r-3}; (a) when a general element of $\mathcal{W}^\vee$ is compounded and (b) when the moving part of a general element of $\mathcal{W}^\vee$ is very ample. For the sake of simplicity, let's take $r=5$,  $g=2r+2=12$ and $d=g+r-3=14$. 

\vskip 4pt
\noindent
(a) If general element $\mathcal{E}$ of $\mathcal{W}^\vee\subset \mathcal{W}^2_8$ is compounded, one of the following may occur:

\noindent
\begin{enumerate}
\item[\rm{(1)}] $C$ is trigonal and $\mathcal{E}=2g^1_3+\Delta$, where $\Delta$ is the base locus of $\mathcal{E}$.

\noindent
\item[\rm{(2)}]
$C$ is bi-elliptic with a bi-elliptic covering $C\stackrel{\phi} \rightarrow E$ and $\mathcal{E}=\phi^*(g^2_3)+\Delta$. 

\noindent
\item[\rm{(3)}] $C$ is a double covering of a curve of genus $2$ with a double covering $C\stackrel{\zeta} \rightarrow E$ and $\mathcal{E}=\zeta^*({g^2_4})$.

\noindent
\item[\rm{(4)}] $C$ is a double covering of a smooth plane quartic $F$ with the double covering $C\stackrel{\eta}\rightarrow F$ and $\mathcal{E}=\eta^*(|K_F|)=\eta^*(g^2_4)$.

\noindent
\item[\rm{(5)}] $C$ a $4$-gonal curve with $\mathcal{E}=2g^1_4$.  
\end{enumerate}
\vskip 4pt
\noindent
In the first three cases (1), (2),  (3), one may easily argue that the residual series $\mathcal{D}=|K_C-\mathcal{E}|$ is not very ample. 
For example if  $C$ is trigonal, we take the residual series $\mathcal{D}$ of $\mathcal{E}=|2g^1_3+q+t|=g^2_8$; note that 
\begin{eqnarray*} \dim|\mathcal{D}-r-s|&=&|K_C-2g^1_3-q-t-r-s|=
\dim|K_C-3g^1_3-t|\\&\ge& \dim|K_C-3g^1_3|-1>\dim\mathcal{D}-2
\end{eqnarray*}
where $q+r+s\in g^1_3$.

\vskip 4pt
\noindent
In the case (2), we note that
$$\dim|\mathcal{E}+\phi^*(p)|=\dim|\phi^*(g^2_3)+\Delta +\phi^*(p)|=\dim|\phi^*(g^3_4)+\Delta|\ge 3$$
hence $\mathcal{D}=|K_C-\mathcal{E}|$ is not very ample. The case (3) is similar.

\vskip 4pt
\noindent
In the case (4), we 
have a natural rational map $\mathcal{W}\dashrightarrow \mathcal{W}^\vee\dashrightarrow \mathcal{X}_{2,3}$, where $\mathcal{X}_{n,\gamma}$ denotes the locus in $\mathcal{M}_g$ corresponding to curves
which are n-fold coverings of smooth curves of genus $\gamma$. By applying  Accola-Griffiths-Harris theorem \cite[page 73]{H1} to a general fiber of the above rational map together with Riemann's moduli count \cite[Satz 1]{Lange} $$\dim\mathcal{X}_{n,\gamma}\le 2g+(2n-3)(1-\gamma)-2,$$ one has
\begin{eqnarray*}
\dim\mathcal{W}&\le&\dim\mathcal{X}_{2,3}+2d-g-3r+1 \\
&\le& (2g-4)+2d-g-3r+1=g+2d-3r-3, 
\end{eqnarray*}
contrary to $\dim\mathcal{W}\ge\lambda (d,g,r)$. 

\vskip 4pt
\noindent
In the case (5) when
$C$ is $4$-gonal with $\mathcal{E}=2g^1_4$, one has a generically injective rational map  $\mathcal{W}\dashrightarrow \mathcal{W}^\vee\dashrightarrow \mathcal{M}^1_{g,4}$ and hence
$\dim\mathcal{W}\le\dim\mathcal{M}^1_{g,4}=2g+3$. Therefore we may conclude that 
$\dim\mathcal{G}=\dim\mathcal{W}=\lambda (d,g,r)=2g+3$  in which case one may use the irreducibility of $\mathcal{M}^1_{g,4}$ instead of the irreducibility of the Severi variety.

\vskip 4pt
\noindent (b) It is clear that the case when the moving part of a general element of $\mathcal{W}^\vee\subset\mathcal{W}^2_8$ is very ample does not occur
 just because there is no smooth plane curve of genus $12=2r+2$. 
 
 \vskip 4pt
 \noindent
 In all, we have the following two possibilites: 
 \begin{enumerate}
 \item[\rm{(i)}]
 For general $(p,\mathcal{D})\in\mathcal{G}$, $|K_C-\mathcal{D}|=\mathcal{E}=2g^1_4$.
 
 \item[\rm{(ii)}]
 For a general $(p,\mathcal{D})\in\mathcal{G}$, $|K_C-\mathcal{D}|=\mathcal{E}=g^2_8$ is base-point-free and birationally very ample.
 \end{enumerate}
 \vskip 4pt
 \noindent
 By the irreducibility of $\mathcal{M}^1_{g,4}$ and  the Severi variety $\Sigma_{14,12}$ we see that there are exactly two components $\mathcal{H}_1$ and $\mathcal{H}_2$ of $\mathcal{H}^\mathcal{L}_{14,12,5}$ corresponding to the possibilities (i) and (ii). It is also clear that 
 $\mathcal{H}_1\neq\mathcal{H}_2$ by semi-continuity and $\dim\mathcal{H}_1=\dim\mathcal{H}_2=\lambda (14,12,5)+\dim\mathbb{P}GL(6)$. 
\end{exam}

\bibliographystyle{spmpsci} 

\end{document}